\newtheorem{thm}{Theorem}[]
\newtheorem*{thm*}{Theorem}
\newtheorem{lem}[thm]{Lemma}
\newtheorem{ex}[thm]{Example}
\newtheorem{rem}[thm]{Remark}
\newcommand{\param}{{\mathchoice{\mkern1mu\mbox{\raise2.2pt\hbox{$
\centerdot$}}
\mkern1mu}{\mkern1mu\mbox{\raise2.2pt\hbox{$\centerdot$}}\mkern1mu}{
\mkern1.5mu\centerdot\mkern1.5mu}{\mkern1.5mu\centerdot\mkern1.5mu}}}
\renewcommand{\setminus}{{\smallsetminus}}
\begin{document}

\title{Fibering rigidity of 3-manifolds with Torelli monodromy}
\author{Ingrid Irmer}
\address{Department of Mathematics and Statistics\\
               University of Melbourne\\
	    Parkville, VIC, 3010 \\
	    Australia}       
\email{ingrid.irmer@unimelb.edu.au}

\begin{abstract}
In this paper it is proven that there is at most one way, up to isotopy, in which a connected, hyperbolic, 
orientable 3-manifold can fiber over the circle with monodromy in the Torelli group.
\end{abstract}

\maketitle
\tableofcontents


\section{Introduction}



In his seminal paper \cite{Thurstonnorm}, Thurston described the ways in which a 3-manifold fibers over the circle. These fiberings were parameterised in terms of the rational points of some facets of a unit norm ball in the first rational cohomology of the 3-manifold.\\

When a 3-manifold $M$ fibers over the circle, and its first Betti number $b_{1}(M)$ satisfies $b_{1}(M)\geq 2$, then $M$ fibers in infinitely many ways. The genus of the fiber surfaces can be arbitrarily large, but are always bounded from below by $\frac{b_{1}(M)-1}{2}$. In this paper we therefore study fiberings of 3-manifolds whose fiber surfaces have genus equal to this lower bound. This happens exactly when the monodromy is in the Torelli group, $\mathcal{T}(S)$, i.e. the subgroup of the mapping class group of $S$ that acts trivially on the homology of $S$. Suppose also that the monodromy is pseudo-Anosov, then the genus of a fiber surface of $M$ is at least 2, hence $b_{1}(M)\geq 5$. Despite the fact that such 3-manifolds fiber in infinitely many ways, we obtain the following somewhat unexpected rigidity result, conjectured by Tom Church and Benson Farb, and communicated to the author by Eriko Hironaka. 



\begin{thm}
\label{maintheorem}
Suppose that a closed, connected, orientable 3-manifold fibers over the circle with pseudo-Anosov monodromy in the Torelli group. Of all the infinite ways $M$ can fiber over the circle, the fiber surface with Torelli monodromy is unique up to isotopy.
\end{thm}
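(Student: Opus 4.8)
The plan is to translate the statement into the language of Thurston's norm and then to analyse it with the pseudo-Anosov suspension flow. As recalled in the introduction, a fibration of $M$ over $S^1$ has monodromy in the Torelli group if and only if its fibre $F$ attains the minimal genus $g=(b_1(M)-1)/2$, equivalently if and only if the dual class $\phi\in H^1(M;\mathbb{Z})$ is primitive, lies in a fibred cone, and has Thurston norm $\|\phi\|_T=-\chi(F)=b_1(M)-3$ --- the least value of $\|\cdot\|_T$ on any primitive fibred class. Since $M$ is hyperbolic the fibre genus is at least $2$, so $b_1(M)\ge 5$. Thus it suffices to prove: at most one primitive class $\phi$, up to sign, is fibred with $\|\phi\|_T=b_1(M)-3$.

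So let $\phi_0$ be such a Torelli class, with pseudo-Anosov monodromy $\psi_0$ on $F_0$ and suspension flow $\Phi_0$ on $M$. The basic observation is that since $\psi_{0\ast}$ is the identity on $H_1(F_0;\mathbb{R})$ while $\psi_0$ multiplies the transverse measures of its invariant foliations by $\lambda^{\pm1}$ with dilatation $\lambda>1$, those foliations are homologically trivial on $F_0$; suspending, the two-dimensional singular foliations $\mathcal F^s,\mathcal F^u$ of $\Phi_0$ are homologically trivial in $H^1(M;\mathbb{R})$. By Fried's theory of cross-sections, the open fibred cone $C_{\Phi_0}\subset H^1(M;\mathbb{R})$ containing $\phi_0$ is exactly the set of classes positive on every closed orbit, every primitive integral class in it is fibred with monodromy a first-return map of $\Phi_0$, and on $C_{\Phi_0}$ the Thurston norm is a single linear functional $\|\phi\|_T=\langle -\mathrm{PD}\,e(\Phi_0),\phi\rangle=\sum_i(\tfrac{p_i}{2}-1)\,\langle\phi,[\gamma_i]\rangle$, where the $\gamma_i$ are the singular periodic orbits carrying the transverse Euler class $e(\Phi_0)$ and $p_i\ge 3$ is the number of prongs.

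Now let $\phi_1\ne\pm\phi_0$ be a second primitive fibred class with $\|\phi_1\|_T=b_1(M)-3$. In Case A, $\phi_1\in C_{\Phi_0}$: then $\phi_0$ and $\phi_1$ both minimize the linear functional above over the primitive integral points of $C_{\Phi_0}$, hence lie in the same affine slice $\{\|\cdot\|_T=b_1(M)-3\}\cap C_{\Phi_0}$, and the goal is to show this (open, convex) slice contains no lattice point other than $\phi_0$. This is where the Torelli hypothesis must be used quantitatively: one describes the cross-sections and their first-return maps via train tracks carried by $\mathcal F^s$, uses the minimality of $g$ together with the vanishing of $[\mathcal F^s]$ to control the singular orbits and their homology classes $[\gamma_i]$, and deduces that $C_{\Phi_0}$ is so narrow about the ray through $\phi_0$, relative to the integral structure on $H^1(M)$, that the minimal slice pinches down to the single point $\phi_0$. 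In Case B, $\phi_1$ lies on a different top-dimensional face of the unit ball $B_T$, with suspension flow $\Phi_1\ne\Phi_0$, so $\phi_0$ and $\phi_1$ are both closest-to-the-origin boundary points of $B_T$ lying in distinct fibred faces; one rules this out either homologically --- combining McMullen's inequality $\|\cdot\|_A\le\|\cdot\|_T$ (an equality on fibred faces since $b_1(M)\ge 2$) with the fact that a Torelli monodromy forces the $\phi_j$-cyclic cover to have $H_1=\mathbb{Q}^{2g}$ with trivial deck action, which constrains the Newton polytope of $\Delta_M$ up to a Torres correction of width $2$ and is incompatible with two such faces --- or geometrically, by making $F_1$ transverse to $\Phi_0$ and analysing the curve pattern $F_0\cap F_1$ to force $\phi_1=\pm\phi_0$.

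The crux is Case A. It is \emph{not} true for a general pseudo-Anosov flow that a primitive integral class of minimal Thurston norm in a fibred cone is unique, nor does the bare cohomology of $M$ distinguish $\phi_0$ (already for $g=1$ the triple cup product is a volume form, for which every $\phi$ looks alike); so the argument must genuinely exploit that the monodromy of $\phi_0$ is trivial on homology --- through the homological triviality of the invariant foliations --- to control how the linear functional $\|\cdot\|_T$ sits with respect to the lattice $H^1(M;\mathbb{Z})$. Extracting the required quantitative estimate on the shape of $C_{\Phi_0}$, presumably via the Teichm\"uller-geometric and train-track machinery set up above, is the technical heart of the proof.
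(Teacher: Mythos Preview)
Your proposal is not a proof but a program: you correctly reformulate the problem in terms of the Thurston norm and Fried's cross-section theory, and you correctly separate into the same-cone and different-cone cases, but you then explicitly concede that the key step in Case~A --- showing that the minimal-norm slice of $C_{\Phi_0}$ contains a single lattice point --- is ``the technical heart of the proof'' and leave it undone. Your own remark that minimality of a primitive class in a fibred cone is \emph{not} automatic shows you appreciate that something substantive is missing; invoking ``train-track machinery set up above'' does not supply it, since no such machinery has actually been set up. Case~B is likewise only gestured at: neither the Alexander-norm argument nor the ``make $F_1$ transverse to $\Phi_0$'' argument is carried through, and it is not clear that the constraints on the Newton polytope of $\Delta_M$ you describe really exclude a second fibred face. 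So as written there is a genuine gap at the centre of both cases.

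The paper takes a completely different and far more elementary route, with no Thurston-norm machinery, no Fried theory, and no Alexander polynomial. Assuming two distinct Torelli fiberings with fibres $S_1,S_2$, it passes to the common cover $\tilde M_T$ corresponding to $\pi_1(S_1)\cap\pi_1(S_2)$, which retracts onto a surface $S_3$ covering both $S_1$ and $S_2$. Cutting $M$ along $S_1\cup S_2$ into ``diamonds'' gives a combinatorial model for $\tilde M_T$; the Torelli hypothesis is used only through the fact that the inclusions $S_i\hookrightarrow M$ induce injections on $H_1$, and a local-to-global separation argument in $\tilde M_T$ then shows that all components of the lift of any curve are homologous in $S_3$. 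This forces $\mathrm{rank}\,H_1(S_3)\le\mathrm{rank}\,H_1(S_i)$, contradicting the fact that $S_3$ is a nontrivial cover of the $S_i$ (here is where pseudo-Anosov is used). If you want to salvage your approach you would need an honest argument pinning down the shape of the fibred cone from the Torelli condition; the paper's covering-space argument sidesteps this entirely.
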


\begin{rem}
Monodromies corresponding to isotopic fiber surfaces are conjugate in the mapping class group. Theorem \ref{maintheorem} implies that a pseudo-Anosov monodromy in the Torelli group is unique up to conjugation in the mapping class group.
\end{rem}

\begin{rem}
The pseudo-Anosov assumption in Theorem \ref{maintheorem} is necessary; see Example \ref{example} and Remark \ref{rem.nopA}.
\end{rem}

Fibering rigidity of 4-manifolds, i.e. surface bundles over surfaces, was studied in \cite{Salter}. In that case, one uses the Johnson Kernel instead of the Torelli group, and the proof of a somewhat different result makes use of the Johnson homomorphism theory. A survey of ``fibering rigidity'' results for 4-manifolds is given in \cite{Rivin}.\\

\textbf{Plan of proof.} Theorem \ref{maintheorem} is proven by contradiction. Suppose there are two fibrations of a hyperbolic manifold $M$, with nonisotopic fibers $S_1$ and $S_2$ and monodromies $\tau_{1}\in \mathcal{T}(S_{1})$ and $\tau_{2}\in \mathcal{T}(S_{2})$. A covering space of $M$ is found, that retracts onto a common infinite cyclic covering space $S_3$ of $S_1$ and $S_2$. The assumption that the monodromies are in the Torelli subgroup is used to show that
a certain lift or lifts of the monodromies act trivially on homology. This contradicts the fact that the image of $H_{1}(S_{3};\mathbb{Z})$ in $H_{1}(M;\mathbb{Z})$ has infinite rank.\\

\textbf{Outline of the paper.} Subsection \ref{notation} introduces some basic notation, conventions and definitions that will be used throughout the paper. Subsection \ref{intersection} deals with maps between the homology of covering spaces, and the global structure of such covering spaces. In Subsection \ref{translation}, some consequences of hyperbolicity are established. Finally Section \ref{final} uses homological arguments to show that there is a product of powers of the monodromies $\tau_{1}$ and $\tau_{2}$ whose lift corresponds to a deck transformation that acts trivially on the homology of a covering space of $M$. In Section \ref{lastone} this is shown to give a contradiction from which Theorem \ref{maintheorem} follows.\\


\subsection*{Acknowledgements} Thanks to E. Hironaka for suggesting this problem to me, to J. Birman and B. Farb for helpful discussions of background work, and to C. Leininger for suggesting the example in Figure \ref{LMfigure}. The author also wishes to thank S. Friedl, A. Hatcher and N. Salter for their comments on an earlier version of this paper; particularly to A. Hatcher and N. Salter for devoting considerable time to making detailed comments and improvements. Figure \ref{nicksfigure} was pointed out to the author by N. Salter.


\section{Background and basic structure}

This Section introduces the formalism and assumptions needed in the proof of Theorem \ref{maintheorem}. Subsection \ref{intersection} derives basic properties of the covering spaces used, and provides a means of visualising their global structure. This is followed by an example in Subsection \ref{translation} to illustrate the consequences of the assumption that the monodromies of the fiberings are pseudo-Anosov.\\

\subsection{Conventions and notations}\label{notation}
Throughout this paper, $M$ will denote a closed, connected, oriented 3-manifold. A fiber in $M$ determines a free homotopy class of surfaces in $M$, as well as an element of $H^{1}(M, \mathbb{Z})$ which is the pullback of the generator of the first cohomology of the base space over which $M$ fibers. When it does not cause confusion, the same notation will be used for a surface embedded in $M$ representing the homotopy class and the fiber. \\

Unless otherwise stated, by \textit{curve} is meant here a nontrivial free homotopy class of maps from the 1-sphere into a surface or 3-manifold. Curves will sometimes be assumed to pass through a basepoint. This basepoint in $M$ is chosen to be in the intersection of the pair of embedded surfaces $S_1$ and $S_2$ representing the two fibers. A curve will often be confused with the image in a surface or 3-manifold of a chosen representative of the homotopy class.  All curves, surfaces and 3-manifolds are assumed to be oriented. \\



Let $\tau_1$ and $\tau_{2}$ denote the monodromies of the fiberings with fibers $S_1$ and $S_2$ respectively. Let $M_{1}$ and $M_{2}$ be the infinite cyclic coverings $M_{1}\simeq S_{1}\times \mathbb{R}$ and $M_{2}\simeq S_{2}\times \mathbb{R}$. Denote by $M_{3}$ the smallest covering space of both $M_{1}$ and $M_{2}$; in other words, $M_{3}$ is the quotient of the universal cover $\tilde{M}$ of $M$ modulo the subgroup $\pi_{1}(M_{1}) \cap \pi_{1}(M_{2})\subset \pi_{1}(M)$. \\

It follows from \cite{Thurstonnorm}, Section 3, that nonisotopic fibers always intersect. Since we are assuming $S_{1}$ and $S_{2}$ are not isotopic, they intersect, hence $\pi_{1}(M_{3})$ is nontrivial.\\

\begin{rem}
The reader should be warned that in what follows, we will be very relaxed about where a curve lives. Any two conjugacy classes of curves $c_1$ and $c_2$ in $M$ that lift to closed curves in $M_{3}$ can be homotoped onto both of the fibers $S_1$ and $S_2$, and the same notation will be used for these curves in the fibers.  It will always be stated in what space we are working, and all curves mentioned should be assumed to be in that space.
\end{rem}

\subsection{The covering space $M_{3}$}\label{intersection}



The 3-manifold $M_{3}$ is a covering space of manifolds $M_{1}$ and $M_{2}$, each of which retract onto a surface, hence $M_{3}$ also retracts onto a surface. The surface onto which $M_{3}$ retracts will be denoted by $S_{3}$. We will show later (see Lemma \ref{infinitecyclic} below) that $S_{3}$ is a common infinite cyclic cover of $S_1$ and $S_2$. A subtle observation due to A. Hatcher is that we will be considering two product structures on $M_{3}$; these will be denoted by $S_{3}\times_{1} \mathbb{R}$ and $S_{3}\times_{2}\mathbb{R}$. The pre-images of the fibers in $M_{3}$ can both be shown to be homeomorphic, but it is not clear that there is a canonical homeomorphism. \\

Denote by $S_{3,1}$ a representative of the homotopy class of $S_{3}$ in $M_{3}$ consisting of a connected component of the pre-image of $S_{1}$, and likewise by $S_{3,2}$ a representative of the homotopy class of $S_{3}$ in $M_{3}$ consisting of a connected component of the pre-image of $S_{2}$. A set of curves on $S_{3,1}$ representing a basis for $H_{1}(S_{3,1};\mathbb{Z})\simeq H_{1}(M_{3};\mathbb{Z})$ can be homotoped onto a set of curves on $S_{3,2}$. This latter set of curves must then also represent a basis for $H_{1}(S_{3,2};\mathbb{Z}) \simeq H_{1}(M_{3};\mathbb{Z})$. It follows that the two different product structures on $M_{3}$ give the same notion of algebraic intersection number, in the sense that homotoping two curves in $M_{3}$ onto $S_{3,1}$ and computing algebraic intersection number in $S_{3,1}$ will give the same result as homotoping the curves onto $S_{3,2}$ and computing algebraic intersection number in $S_{3,2}$.\\


\textbf{Maps between homology.} Figure \ref{commutative} shows the induced maps on homology from the covering spaces.

\begin{figure}
\centering
\def\svgwidth{7cm}
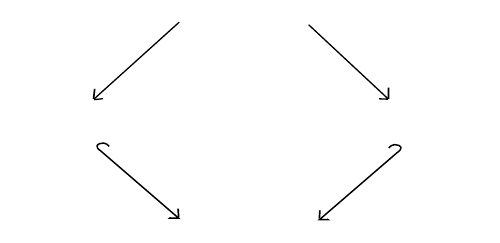
\caption{}
\label{commutative}
\end{figure}

\begin{lem}
\label{lemmachen}
The diagram in Figure \ref{commutative} commutes, and the maps from $H_{1}(M_{1};\mathbb{Z})$ and $H_{1}(M_{2};\mathbb{Z})$ 
into $H_{1}(M;\mathbb{Z})$ are injective.
\end{lem}

\begin{proof}
Commutativity follows from the fact that the diagram shows the maps on homology induced by a commutative diagram of covering spaces.\\

The injectivity claim uses the fact that the monodromies are in the Torelli groups of the surfaces representing the fibers. It is a consequence of this assumption that the embeddings of the fibers in $M$ induce isomorphisms of $H_{1}(M_{1};\mathbb{Z})$ and $H_{1}(M_{2};\mathbb{Z})$ with their images in $H_{1}(M; \mathbb{Z})$.
\end{proof}

We now want to be able to visualise $M_{3}$. The rest of this section derives general properties of $M_{3}$ that, unless otherwise stated, do not depend on the assumptions that the monodromies are pseudo-Anosov elements of the Torelli group. From now on it will be assumed that the two surfaces representing the fibers above any pair of points in the pair of base spaces are in general and minimal position. In particular, $S_{1}$ and $S_{2}$ are embedded, intersect transversely and minimally.\\

\begin{figure}[h]
\centering
\def\svgwidth{7cm}
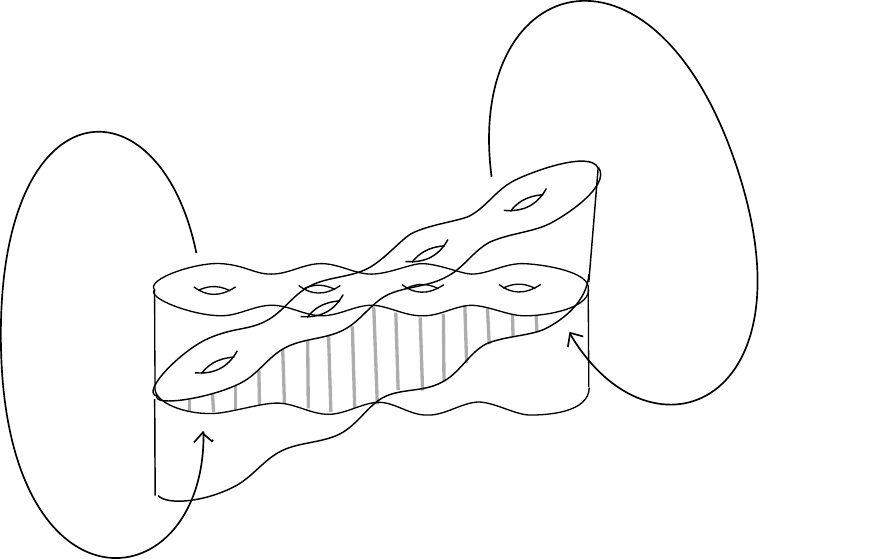
\caption{A ``diamond'', shown in grey lines.}
\label{diamonds}
\end{figure}

\textbf{Diamonds.} Fix a Riemannian metric on $M$. It induces Riemannian metrics on $M_{3}$, $M_{1}$, $M_{2}$ and $M\setminus (S_{1}\cup S_{2}).$ The interior of a diamond is a connected component of $M\setminus (S_{1}\cup S_{2})$ as shown in Figure \ref{diamonds}. A diamond is the metric space completion of its interior. Diamonds lift isometrically to $M_{3}$.To see why, note that there is a deformation retract of a diamond $d$ onto a connected component of the complement in $M$ of a regular neighbourhood of $S_{1}\cup S_{2}$. Let $M\rightarrow S^{1}$ denote the fibering with fiber $S_1$. Then the composition $d\hookrightarrow M \rightarrow S^{1}$ induces the trivial map on $\pi_{1}(M)$. Similarly for the fibering with fiber $S_2$. It follows that the embedding $d\hookrightarrow M$ lifts to the covering $M_{3}$ corresponding to the intersection of the kernels of the two maps.\\

A diamond has two projection maps onto $S_{3}$ via each of the product structures $\times_{1}$ and $\times _{2}$. In this sense, a diamond can be collapsed onto the union of a subsurface of $S_{1}$ and a subsurface of $S_{2}$.\\


\textbf{Example of a diamond.} The following example of a pair of fibers in a 3-manifold fibering over the circle is taken from \cite{LM}, Lemma 5.1. \\

\begin{ex}
\label{diamondexample}
For a simple curve $\alpha$ in a surface, a Dehn-twist around $\alpha$ is denoted by $T_{\alpha}$. Let $M$ be the 3-manifold 
fibering over the circle with fiber $S_1$ and monodromy given by the pseudo-Anosov 
\begin{equation*}
\tau_{1}:=T^{-1}_{\beta_{0}}T_{\alpha_{0}}T_{\alpha_{1}}T^{-1}_{\beta_{1}}
\end{equation*}
where the curves $\alpha_{i}$ and $\beta_{i}$ for $i=0,1$ are shown in Figure \ref{LMfigure}. It is assumed that the Dehn twists are applied from right to left. The curve $\gamma$ shown in Figure \ref{LMfigure} has the property that $\tau_{1}(\gamma)-\gamma$ bounds an embedded surface with genus one in $S_{1}$. When $M$ is cut along the fiber $S_1$, there is therefore a genus two surface $\Sigma$  in $M$ whose intersection with $M\setminus S_{1}\simeq S_{1}\times I$ is a surface with boundary $\tau_{1}(\gamma)-\gamma$.\\

It is argued in \cite{LM}, Lemma 5.1, that the homology class $[S_{1}]+[\Sigma]$ is in the interior of a cone over a fibered face. The surface $S_2$ with $[S_{2}]=[S_{1}]+[\Sigma]$, whose intersection with $S_{1}\times I$ is illustrated in Figure \ref{s2}, is therefore also a fiber, with pseudo-Anosov monodromy. The monodromy of the fiber $S_2$ can not be in the Torelli group, because the genus of $S_2$ is larger than the genus of $S_1$. 
\end{ex}

As shown in Figure \ref{s2}, when $S_{1}\times I$ is cut along its intersection with $S_2$, a diamond is obtained. The curve $g_1$ corresponds to a generator of the deck transformation group of the infinite cyclic cover $S_{3}\rightarrow S_{1}$, and the curve $g_2$ corresponds to a generator of thedeck transformation group of the infinite cyclic cover $S_{3}\rightarrow S_{2}$. Figure \ref{s2} is slightly misleading; $(S_{1}\times I)\setminus S_{2}$ is actually connected. This diamond has two copies of $S_{2}\setminus S_{1}$ on its boundary, and two copies of $S_{1}\setminus S_{2}$ on its boundary. The cover $M_{3}$ is obtained by stacking diamonds on top of each other and next to each other, in such a way that the fibers match up. In $S_{3}\times_{2}\mathbb{R}$ the cover $S_3$ of $S_2$ is obtained by attaching many copies of $S_{2}\cap (S_{1}\times I)$. The cover $S_3$ of $S_1$ is obtained by cutting $S_1$ along the curve in the intersection of $S_1$ with $S_2$ and attaching the pieces to form an infinite cyclic cover, as shown in Figure \ref{covers1}. These two covering spaces give two different embeddings of $S_3$ in $M_3$.\qed

\begin{figure}
\centering
\def\svgwidth{12cm}
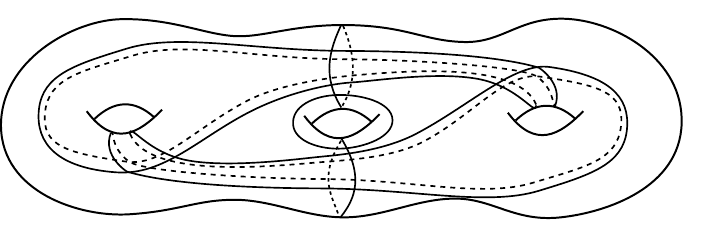
\caption{The curves $\alpha_0$, $\alpha_{1}$, $\beta_{0}$ and $\beta_{1}$ in the definition of $\tau_{1}$ from the example of a diamond. Note that $\tau_{1}(\gamma)-\gamma$ bounds a genus one subsurface of $S_1$.}
\label{LMfigure}
\end{figure}

\begin{figure}
\centering
\def\svgwidth{11cm}
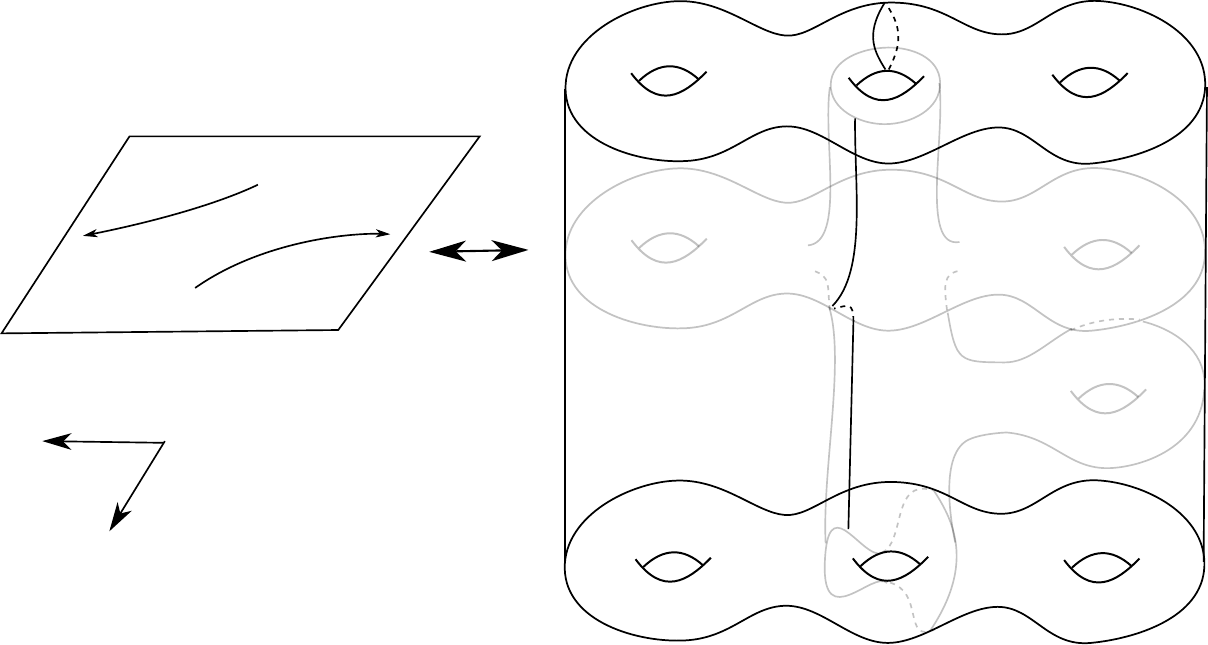
\caption{The intersection of $S_2$ with $S_{1}\times I$ in the example of a diamond is shown in grey. The arrows indicate how the deck transformations $\delta_{1}$ and $\delta_{2}$ translate the diamond.}
\label{s2}
\end{figure}

\begin{figure}
\centering
\includegraphics[width=5cm]{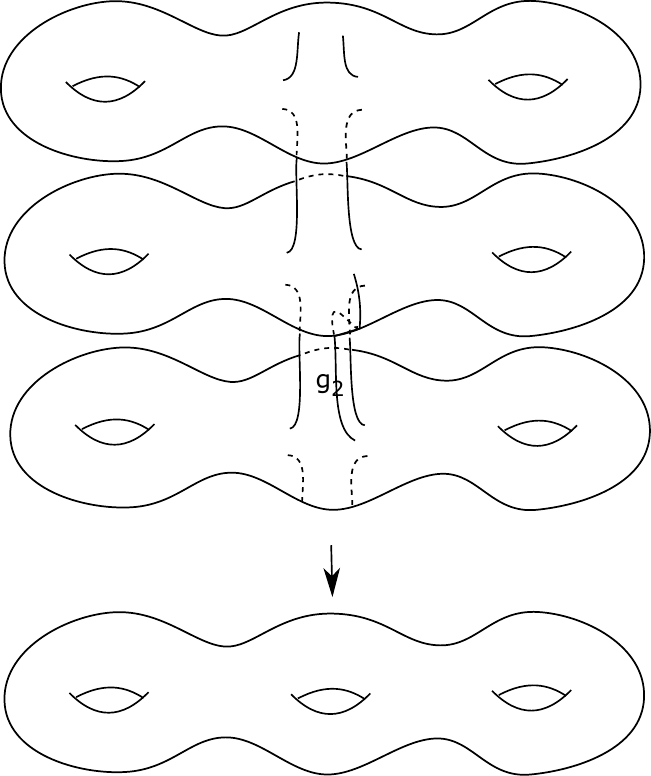}
\caption{The infinite cyclic cover $S_3$ of $S_1$. The lift of the curve $g_2$ from $M$ is shown.}
\label{covers1}
\end{figure}

\pagebreak

\begin{lem}
\label{infinitecyclic}
The covers $S_3$ of $S_1$ and $S_3$ of $S_2$ are infinite cyclic. Moreover, the deck transformation group of the cover $M_{3}\rightarrow M$ is Abelian.
\end{lem}

\begin{proof}
Let $\phi_{1}$ and $\phi_{2}$ be the surjections of $\pi_{1}(M)$ onto $\mathbb{Z}$ defining the fiberings with fibers $S_1$ and $S_2$ respectively. Alternatively, $\phi_1$ and $\phi_2$ can be thought of as defining the covering spaces $M_{1}$ and $M_{2}$ of $M$. Recall that the fundamental group of the cover $M_{3}$ of $M$ is the smallest covering space of both $M_{1}$ and $M_{2}$. The fundamental group of $M_{3}$ is therefore given by the intersection of the kernels of $\phi_{1}$ and $\phi_{2}$. This is isomorphic to the kernel of the product homomorphism $\phi_{1}\times \phi_{2}: \pi_{1}(M)\rightarrow \mathbb{Z}\times \mathbb{Z}$. The deck transformation group of the cover $M_{3}$ of $M$ is therefore an infinite subgroup of $\mathbb{Z}\times \mathbb{Z}$, from which it follows that it must be isomorphic to an infinite cyclic group or a free Abelian group of rank two. \\

\begin{figure}
\[
\xymatrix{
		& 1 \ar[d]						& 1 \ar[d]						& 0 \ar[d]			& \\
1 \ar[r]	&\pi_{1}(S_{3}) \ar[r] \ar[d]			& \pi_{1}(S_{2}) \ar[r]^{\phi_{1}} \ar[d] 		& k\mathbb{Z} \ar[d]	\ar[r]		&0\\
1 \ar[r]	& \pi_{1}(S_{1}) \ar[r] \ar[d]_{\phi_{2}}		& \pi_{1}(M) \ar[r]^{\phi_{1}} \ar[d]_{\phi_{2}}	& \mathbb{Z} \ar[d] \ar[r]		& 0\\
0 \ar[r]	&k\mathbb{Z} \ar[r] \ar[d]					& \mathbb{Z} \ar[r]	\ar[d]					& \mathbb{Z}/k\mathbb{Z}\ar[r]  	\ar[d]		& 0\\
		& 0							& 0							&0				&
}
\]
\caption{A commutative diagram of the covering spaces with all rows and columns exact, from \cite{Rivin}.}
\label{nicksfigure}
\end{figure}

In Figure \ref{nicksfigure} exactness of the middle row and column are given. That the remaining rows and columns are exact is a consequence of the third isomorphism theorem. Since $S_1$ is not isotopic to $S_2$ the integer $k$ can not be zero. The claim that $S_3$ is an infinite cyclic cover of $S_1$ and $S_2$ can be seen by reading off the first column and row respectively.
\end{proof}

\textbf{Lifts of monodromies.} The monodromy $\tau_1$ of the fibration of $M$ corresponding to $S_{1}$ determines a deck transformation $\delta_{1}$ of the cover $M_{1}\simeq S_{1}\times \mathbb{R}$ of $M$ of the form $(x,t)\mapsto (\tau_{1}(x), t+1)$. We claim that $\delta_{1}$ can be lifted to a homeomorphism $\tilde{\delta}_{1}:M_{3}\rightarrow M_{3}$ of the form $(x,t)\mapsto (\tilde{\tau}_{1}(x), t+1)$, where $\tilde{\tau}_{1}:S_{3,1}\rightarrow S_{3,1}$. This is because the action of $\delta_{1}$ on $\pi_{1}(M)$ takes the normal subgroup $\pi_{1}(M_{3})$ to itself. Recall that $\pi_{1}(M_{3})$ is isomorphic to $\pi_{1}(S_{1})\cap \pi_{2}(S_{2})=Ker(\phi_{1})\cap Ker(\phi_{2})$, and is a subgroup of $\pi_{1}(M)$. In $\pi_{1}(M)$, we saw in Lemma \ref{infinitecyclic} that the action of $\delta_{1}$ is conjugation by a loop dual to the surface $S_1$. Since each of $Ker(\phi_{1})$ and $Ker(\phi_{2})$ are normal subgroups of $\pi_{1}(M)$, they are each mapped to themselves by conjugation, and hence so is their intersection. A symmetric argument shows that $\delta_{2}$ can also be lifted. \\

The lift to $S_{3,1}$ of $\tau_{1}$, $\tilde{\tau}_{1}$, is only defined up to deck transformation of the cover $S_{3,1}\subset S_{3}\times_{1} \mathbb{R}$ of $S_{1}$, and the lift $\tilde{\tau}_{2}$ of $\tau_{2}$ is only defined up to a deck transformation of the cover $S_{3,2}\subset S_{3}\times_{2} \mathbb{R}$ of $S_{2}$.

The next lemma collects several elementary statements.

\begin{lem}
\label{andyslemma}
\begin{enumerate}
\item The covering space $M_{3}$ is tiled by diamonds.
\item In $M_{3}$, the lifts $\tilde{\delta}_1$ and $\tilde{\delta}_2$ map diamonds to diamonds. 
\item There are fundamental domains for the covering $M_{3}$ of $M$ consisting of a finite union of diamonds.
\item There are lifts $\bar{\delta}_{1}$ and $\bar{\delta}_{2}$ of $\delta_{1}$ and $\delta_{2}$ respectively such that the deck transformation group of the cover $M_{3}\rightarrow M$ is generated by a pair of elements corresponding to words of the form $\bar{\delta}^{k_{1}}_{1}\bar{\delta}^{k_{2}}_{2}$, and $\bar{\delta}^{k_{3}}_{1}\bar{\delta}^{k_{4}}_{2}$, $k_{1}, k_{2}, k_{3}, k_{4}\in \mathbb{Z}$. 
\end{enumerate}
\end{lem}

Note that $\tilde{\delta}_{1}$ and $\tilde{\delta}_{2}$ commute, by Lemma \ref{infinitecyclic}.

\begin{proof}
To show (1), note that diamonds lift homeomorphically to $M_{3}$, and that the pre-images of $S_1$ and $S_2$ cut $M_{3}$ into diamonds.\\

The statement (2) is a consequence of (1) and the fact that each of $\tilde{\delta}_1$ and $\tilde{\delta}_2$ maps the pre-image of $S_1$ onto itself, and the pre-image of $S_2$ onto itself.\\



Recall that $S_1$ and $S_2$ are assumed to be in minimal position, so there are only finitely many connected components of $S_{1}\cap (M\setminus S_{2})$. Part (3) stating that there can be only finitely many diamonds in a fundamental domain for the cover $M_{3}$ of $M$ is a consequence of this. Figure \ref{s2} shows a pair of fibrations for which there is only one diamond in the fundamental domain. However, the multicurve $S_{1}\cap S_{2}$ in $S_1$ could cut $S_1$ into more than one connected component, in which case there will be more than one diamond. \\


The statement (4) is a corollary of (2) and (3).
\end{proof}

\textbf{Interpreting Figures} A schematic representation of the action of $\tilde{\tau}_{1}$ on $M_{3}$ is shown in Figure \ref{decktransformationgroup}. In this and later figures, horizontal lines will represent the connected components of the pre-image of $S_1$, i.e. copies of $S_{3}$, and slanted lines will represent connected components of the pre-image of $S_2$. In all figures the vertical direction represents the direction in which the $\mathbb{R}$ coordinate varies in the product structure $\times_{1}$. Dots or line segments placed vertically above each other will represent freely homotopic curves or subsurfaces.

\begin{figure}[h]
\centering
\def\svgwidth{10cm}
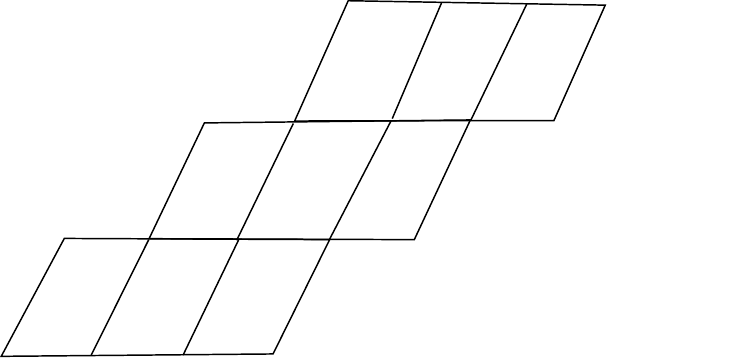
\caption{A fundamental domain $\mathcal{F}$ for the covering $M_{3} \rightarrow M_{1}$, and an example of how $\tilde{\delta}_1$ might translate $\mathcal{F}$.} 
\label{decktransformationgroup}
\end{figure}

\subsection{Monodromies that are not pseudo-Anosov}
\label{translation}

In this subsection we discuss examples of manifolds for which $\tau_1$ and $\tau_2$ are not pseudo-Anosov. The next example illustrates a phenomenon that the pseudo-Anosov assumption on the monodromies rules out. \\

\begin{ex}[Fibrations with Torelli monodromy that are not pseudo-Anosov]
\label{example}
Let $\tau_1$ be a mapping class in the Torelli group that is not pseudo-Anosov and leaves a simple nonseparating curve $s$ on $S_1$ invariant. There is then a torus $T$ in $M$ that intersects $S_1$ along the curve $s$. Suppose $S_2$ is obtained from $S_1$ by Dehn twisting around $T$ in a direction transverse to $S_1$. The fibers can not be isotopic, because they represent elements of $H_{2}(M;\mathbb{Z})$ differing by the homology class $[T]$. Since $s$ is nonseparating, $[T]$ can not be the trivial homology class.\\

Since both fibers have the same genus, both monodromies must be in the Torelli group. \qed
\end{ex}

In Example \ref{example} the fiberings have non-isotopic fibers. However, the Dehn twist defined above represents a homeomorphism of $M$ onto itself, taking $S_1$ to $S_2$. 
 

\section{Actions of the deck transformation groups on homology}
\label{final}

In this section we discuss the action of the deck transformations of the cover $M_{3}\rightarrow M$ on the integral homology of $M_{3}$. When the monodromies $\tau_{1}$ and $\tau_{2}$ are pseudo-Anosov, this leads to a contradiction to the existence of the two nonisotopic fibers $S_{1}$ and $S_{2}$ with Torelli monodromy. \\

\textbf{Intersection curves }The product structure $\times_{1}$ on $M$ gives rise to a map $h_{1}:S_{1}\times [0,1]\rightarrow M$ such that at each $t$, $h_{1}:S_{1}\times\{t\} \rightarrow M$ is an embedding with image $S_{1,t}$. Likewise for $\times_{2}$. We assume that for generic $t_{1}$, $t_{2}\in [0,1]$, the surfaces $S_{1,t_{1}}$ and $S_{2,t_{2}}$ in $M$ intersect transversely. A curve $c$ in $M$ is called an intersection curve if $c\in S_{1,t_{1}}\cap S_{2,t_{2}}$ for some $t_{1}, t_{2} \in [0,1]$.\\



As $t_{2}$ varies in $[0,1]$, $S_{2,t_{2}}$ sweeps out $M$. Moreover, for fixed $t_{1}\in [0,1]$, the intersection curves in $S_{2,t_{2}}\cap S_{1,t_{1}}$ give rise to a singular 1-dimensional foliation of $S_{1, t_{1}}$. This foliation determines a set of intersection curves that span half the homology of $S_{3}$.\\

Recall that by Lemma \ref{infinitecyclic}, the covers $S_{3,1}\rightarrow S_{1}$ and $S_{3,2}\rightarrow S_{2}$ are cyclic. Let $[s]$ be a homology class in $H_{1}(M;\mathbb{Z})$ such that fundamental domains for the covers $S_{3,1}\rightarrow S_{1}$ and $S_{3,2}\rightarrow S_{2}$ respectively are obtained by cutting $S_{1}$ and $S_{2}$ along curves in the homology class $[s]$. \\

Denote by $d_{1}$ a generator of the deck transformation group of the cover $S_{3,1}\rightarrow S_{1}$ and by $d_{2}$ a generator of the deck transformation group of the cover $S_{3,2}\rightarrow S_{2}$. It will be assumed that $d_{1}$ and $d_{2}$ each shift an oriented curve $\tilde{s}$ projecting onto $s$ to its right. The action of $d_{1}$ on curves corresponds to conjugation in $\pi_{1}(S_{3})$ by an arc $\tilde{s}_{1}^{*}$. Here $s_{1}^{*}$ begins at the base point of $S_{3}$ and projects onto a closed curve in $S_{1}$. Similarly the action of $d_{2}$ on curves corresponds to conjugation by an arc $\tilde{s}_{2}^{*}$.\\

The next lemma gives a means of constructing a basis for the homology $H_{1}(S_{3};\mathbb{Z})$.

\begin{lem}
\label{basis}
Let $\{a_{i}\}$ be a maximal, nonseparating set of pairwise disjoint intersection curves on $S_{1}$ such that $[s]$ is not in the span of $\{[a_{i}]\}$ in $H_{1}(S_{1};\mathbb{Z})$. Then the connected components of the pre-images of the set of curves $\{a_{i}\}$, their Poincar\'e duals $\{b_{j}\}$, and one connected component of the pre-image of $s$, represent a basis for $H_{1}(S_{3};\mathbb{Z})$.
\end{lem}

\begin{proof}
This can be easily verified by computation.
\end{proof}

Let $\tilde{c}$ be a connected component of the pre-image in $M_{3}$ of a nonseparating curve $c$ in $M$, $[c]\neq[s]$. Lemma \ref{basis} will be used to show that, for example, $\tilde{c}$ can not be homologous in $S_{3}$ to its conjugate by $\tilde{s}_{1}^{*}$ or $\tilde{s}_{2}^{*}$.\\

Note that deck transformations take null homologous curves to null homologous curves. It follows that the deck transformations of the cover $M_{3}\rightarrow M$ induce actions on the homology $H_{1}(M_{3};\mathbb{Z})$. The actions of the deck transformations $d_{1}$ and $d_{2}$ on $H_{1}(M_{3};\mathbb{Z})$ will be denoted by $d_{1}^{*}$ and $d_{2}^{*}$ respectively. The next lemma describes how deck transformations act on homology.\\

\begin{lem}
\label{eitheror}
At least one of the deck transformation $d_{2}^{-1}d_{1}$ and $d_{1}d_{2}$ acts trivially on $H_{1}(M_{3},\mathbb{Z})$. 
\end{lem}

\begin{proof}
We will prove the lemma using a basis for $H_{1}(M_{3};\mathbb{Z})$ given by Lemma \ref{basis}, and a case analysis of the position of an intersection curve $\tilde{a}_{i}$ or its Poincar\'e dual $\tilde{b}_{j}$.\\

Denote by $M^{k}$ the $k$-fold cyclic cover of $M$, where $k$ is the integer from the commutative diagram of Figure \ref{nicksfigure}. Let $\mathcal{D}$ be a fundamental domain of the cover $M_{3}\rightarrow M^{k}$. The fundamental domain $\mathcal{D}$ consists of $k^{2}$ diamonds, as shown in Figure \ref{mathcald}. These diamonds are stacked beside and on top of each other in such a way that $\partial \mathcal{D}$ is a union of four surfaces with boundary. Two of the surfaces, $\mathcal{F}_{1}$ and $d_{2}(\mathcal{F}_{1})$ are in the pre-image of $S_{1}$, and the other two surfaces, $\mathcal{F}_{2}$ and $d_{1}(\mathcal{F}_{2})$ are in the pre-image of $S_{2}$. By construction, $\mathcal{F}_{1}$ is a fundamental domain of the cover $S_{3,1}\rightarrow S_{1}$, and $\mathcal{F}_{2}$ is a fundamental domain of the cover $S_{3,2}\rightarrow S_{1}$.\\


\begin{figure}
\centering
\def\svgwidth{8cm}
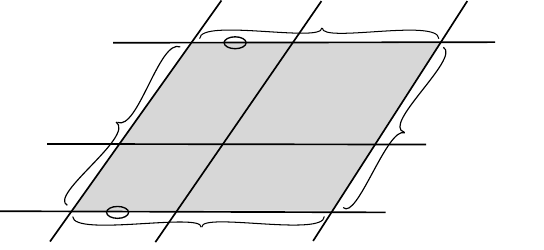
\caption{A schematic representation of $\mathcal{D}$ with $k=2$ is shown. The horizontal lines represent connected components of the pre-image of $S_{1}$, and the slanted lines represent connected components of $S_{2}$.}
\label{mathcald}
\end{figure}

To begin with, fix an intersection curve $a_{i}$ as in Lemma \ref{basis}. Denote by $\tilde{a}_{i}$ a connected component of the pre-image in $S_{3,1}\subset M_{3}$ of $a_{i}$. It is assumed that $\tilde{a}_{i}$ is contained in the surface $\mathcal{F}_{1}\subset \partial \mathcal{D}$, and hence $d_{2}(\tilde{a}_{i})$ is also on $\partial \mathcal{D}$ by construction.\\

Since $\tau_{1}\in \mathcal{T}(S_{1})$ and $\tau_{2}\in \mathcal{T}(S_{2})$, by Lemmas 3 and \ref{andyslemma} part (4), the projection to $S_{1}$ of $\tilde{a}_{i}$ is homologous in $M$ to the projection of $d_{2}(\tilde{a}_{i})$. Consequently there is an embedded surface $\Sigma$ in $M$ with boundary the projection to $M$ of $\tilde{a}_{i}-d_{2}(\tilde{a}_{i})$.\\


We claim that no lift of $\Sigma$ has interior contained in $\mathcal{D}$. For otherwise, this would imply the existence of a surface in $M_{3}$, contained in the pre-image of $\Sigma$, with boundary $\tilde{a}_{i}-d_{2}(\tilde{a}_{i})$, which would contradict Lemma \ref{basis}. For the same reason, the interior of $\Sigma$ can not intersect $\partial \mathcal{D}$ along a set of curves that are null homologous in $M_{3}$, as in Figure \ref{boundaryint} (a).\\

\begin{figure}
\centering
\def\svgwidth{10cm}
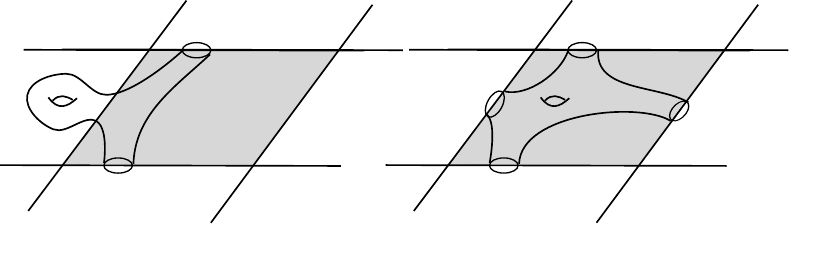
\caption{The shaded diamonds represent $\mathcal{D}$. The pre-image of $\Sigma$ can not intersect $\mathcal{D}$ as in part (a). A more realistic representation of a connected component of the pre-image of $\Sigma$ is shown in part (b).}
\label{boundaryint}
\end{figure}

\begin{figure}
\centering
\def\svgwidth{17cm}
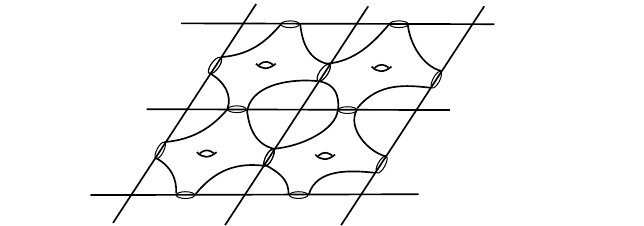
\caption{}
\label{union}
\end{figure}

Consider a connected component of the pre-image of $\Sigma$ in $M_{1}$. By Lemmas \ref{lemmachen} and \ref{andyslemma} part (4), the projection to $M_{1}$ of $\tilde{a}_{i}$ and $d_{2}(\tilde{a}_{i})$ are homologous, so it can be assumed without loss of generality that there is a union of connected components, $\Sigma_{1}$, of the pre-image of $\Sigma$ with boundary the projection to $M_{1}$ of $\tilde{a}_{i}-d_{2}(\tilde{a}_{i})$. Using the product structure $\times_{1}$, in $M_{1}\simeq S_{1}\times_{1}\mathbb{R}$ the surface $\Sigma_{1}$  can be homotoped so that its interior is disjoint from $S_{1}\times \{0\}$ and $S_{1}\times \{k\}$. The surface $\Sigma_{1}$ therefore intersects $\partial \mathcal{D}$ along a set of curves $f$ on $S_{2}$.\\

If $c$ is a curve in $M$, denote by $\tilde{c}$ a connected component of the pre-image of $c$ in $M_{3}$. When $c$ is a set of curves in $S_{2}$ disjoint from $S_{1}$, $\tilde{c}$ will denote the connected components of the pre-image of $c$ contained in $\mathcal{F}_{2}\subset \partial\mathcal{D}$. The orientation of the set of curves $\tilde{f}$ in $\mathcal{F}_{2}$ is chosen such that $\tilde{f}$ has a connected component of the lift of $\Sigma$ to its left.\\

It follows from Lemma \ref{basis} that 
\begin{equation}
\label{s}
d_{1}^{*}[\tilde{s}]=[\tilde{s}]=d_{2}^{*}[\tilde{s}]
\end{equation}
for any $\tilde{s}$ projecting onto the homology class $[s]$ in $M$. The lemma is therefore true for $[\tilde{s}]$. If $[\tilde{f}]=-[\tilde{a}_{i}]$ or $[\tilde{f}]=-[\tilde{a}_{i}]+\lambda[\tilde{s}]$ $\lambda\in \mathbb{Z}$, in $H_{1}(M_{3};\mathbb{Z})$, this implies the existence of a surface in $M_{3}$ with boundary curves homologous to $\{(d_{1}(\tilde{a}_{i})-\tilde{a}_{i})-(d_{2}(\tilde{a}_{i})-\tilde{a}_{i})\}$. This concludes the proof of Lemma \ref{eitheror} for $d^{-1}_{2}d_{1}$ using the homology class $[\tilde{a}_{i}]$. Likewise, if $[\tilde{f}]=[d_{2}(\tilde{a}_{i})]+\lambda[\tilde{s}]$, then Lemma \ref{eitheror} follows for $d_{2}d_{1}$ using the homology class $[\tilde{a}_{i}]$. Assume now that for all $\lambda\in \mathbb{Z}$, and all $\tilde{a}_{i}$, $$[\tilde{f}]\neq -[\tilde{a}_{i}]+\lambda[\tilde{s}]\qquad\text{ and }\qquad[\tilde{f}]\neq [\tilde{s}]\qquad\text{ and }\qquad[\tilde{f}]\neq [d_{2}(a_{i})]+\lambda[\tilde{s}].$$ In addition, it is easy to see that $$[\tilde{f}]\neq \lambda_{1}[\tilde{a}_{i}]+\lambda_{2}[\tilde{s}]\qquad\text{and}\qquad[\tilde{f}]\neq \lambda_{1}[d_{2}(a_{i})]+\lambda_{2}[\tilde{s}]\qquad\text{ for all }\lambda_{1}, \lambda_{2}\in \mathbb{Z}-\{\pm1\}.$$ This is because when $\lambda_{1}\neq \pm 1$, $\tilde{f}-d_{1}(\tilde{f})$ is not a primitive homology class in $H_{1}(M_{3};\mathbb{Z})$, and so by Lemma \ref{basis} could not be homologous to the primitive homology class with representative $d_{2}(\tilde{a}_{i})-\tilde{a}_{i}$.\\

Recall that algebraic intersection number in $M_{3}$ is defined by projecting curves onto $S_{3,1}$ or equivalently $S_{3,2}$. When $[\tilde{f}]$ is not in the span of $\{d_{1}^{*n}d_{2}^{*}[\tilde{a}_{i}]\text{, }d^{*n}_{1}[\tilde{a}_{i}] \text{, } [\tilde{s}]\mid n\in \mathbb{N}\}$ for fixed $i$, we can find a closed curve $\tilde{e}$ in $S_{3,2}$ with nonzero algebraic intersection number with $\tilde{f}$ but with zero algebraic intersection number with each curve in the set $\{d_{1}^{*n}d_{2}^{*}[\tilde{a}_{i}]\text{, }d^{*n}_{1}[\tilde{a}_{i}]\mid n\in \mathbb{N}\}$. Since  $\tilde{e}$ is a closed curve in $M_{3}$, and $e$ has finite geometric intersection number with $f$ in $M$, it is not possible that $\tilde{e}$ intersects every curve in the set $\{d_{1}^{n}(\tilde{f})\mid n\in \mathbb{N}\}$. Construct a surface $\mathcal{S}$ in $M_{3}$ by attaching a lift of $\Sigma_{1}$, call it $\tilde{\Sigma}_{1}$, to translates of $\tilde{\Sigma}_{1}$, as follows:
\begin{equation*}
\mathcal{S}=\tilde{\Sigma}_{1}\cup_{d_{1}(\tilde{f})}d_{1}(\tilde{\Sigma}_{1})\cup_{d_{1}^{2}(\tilde{f})}d_{1}^{2}(\tilde{\Sigma}_{1})\cup \ldots\cup_{d^{m}_{1}(\tilde{f})}d_{1}^{m}(\tilde{\Sigma}_{1})
\end{equation*}
If $m$ is chosen to be large enough that $\tilde{e}$ and $d^{m}_{1}(\tilde{f})$ are disjoint, the boundary of $\mathcal{S}$ has nonzero algebraic intersection number with $\tilde{e}$. This contradiction implies that $[\tilde{f}]$ must be in the span of $\{d_{1}^{*n}d_{2}^{*}[\tilde{a}_{1}]\text{, }d^{*n}_{1}[\tilde{a}_{1}]\text{, }[\tilde{s}]\mid n\in \mathbb{N}\}$. However, unless we have one of the special cases $[\tilde{f}]=\pm[\tilde{a}_{i}]+\lambda[\tilde{s}]$ or $[\tilde{f}]=\pm[d_{2}(\tilde{a}_{i})]+\lambda[\tilde{s}]$, Lemma \ref{basis} then implies that $d_{1}^{*}[\tilde{f}]$ can not be in the span of $\{d_{1}^{*n}d_{2}^{*}[\tilde{a}_{1}]\text{, }d^{*}_{1}[\tilde{a}_{1}], [\tilde{s}]\mid n\in \mathbb{Z}\setminus (\mathbb{N}\setminus \{1\})\}$. A contradiction can then be obtained similarly to the previous case, only now $\mathcal{S}$ is constructed out of a union of translations of $\tilde{\Sigma}_{1}$ under powers of $d_{1}^{-1}$ instead of $d_{1}$.\\

Lemma \ref{eitheror} for the homology class $[\tilde{a}_{i}]$ now follows from the following claim: $$[\tilde{f}]\neq[\tilde{a}_{i}]+\lambda[\tilde{s}]\qquad\text{ and }\qquad[\tilde{f}]\neq-[d_{2}(\tilde{a}_{i})]+\lambda[\tilde{s}].$$ 
Proof of claim: Suppose $[\tilde{f}]=[\tilde{a}_{i}]+\lambda[\tilde{s}]$. In the argument from the previous paragraph, replace $d_{2}$ by $d_{2}^{2}$, and $\mathcal{D}$ by $\mathcal{D}^{'}:=\mathcal{D}\cup_{d_{2}(\mathcal{F}_{1})}d_{2}(\mathcal{D})\cup d_{1}(\mathcal{D}\cup_{d_{2}(\mathcal{F}_{1})}d_{2}(\mathcal{D}))$, as in Figure \ref{union}. Then $\tilde{f}$ is replaced by $\tilde{f}\cup d_{2}(\tilde{f})$.\\

By Lemma \ref{andyslemma} part (4), the curves $d_{1}(\tilde{a}_{i})$ and $d_{2}(\tilde{a}_{i})$ project to homologous curves in $M$. It therefore follows from the argument just given with $\mathcal{D}^{'}$ in place of $\mathcal{D}$, that the homology class $[\tilde{f}]+[d_{2}(\tilde{f})]$ can only be in the span of $\{d_{1}^{*n}d_{2}^{*2}[\tilde{a}_{1}]\text{, }d^{*n}_{1}[\tilde{a}_{1}]\text{, }[\tilde{s}]\mid n\in \mathbb{N}\}$ if $[d_{1}(\tilde{a}_{i})]=[d_{2}(\tilde{a}_{i})]$. However, since $d_{1}^{*}[\tilde{s}]=d_{2}^{*}[\tilde{s}]=[\tilde{s}]$, the fact that the boundary of $\tilde{\Sigma}_{1}$ is null homologous then implies that $2[\tilde{a}_{i}]-[d_{2}(\tilde{a}_{i})]-[d_{1}(\tilde{a}_{i})]=2[\tilde{a}_{i}]-2[d_{1}(\tilde{a}_{i})]$, contradicting Lemma \ref{basis}. Hence $[\tilde{f}]\neq[\tilde{a}_{i}]+\lambda[\tilde{s}]$. The argument showing $[\tilde{f}]\neq-[d_{2}(\tilde{a}_{i})]+\lambda[\tilde{s}]$ is analogous.\qed\\

Going back to the proof of Lemma \ref{eitheror}, note that the above discussion applies to a curve $\tilde{b}_{j}$ on $\partial \mathcal{D}$ in the pre-image of $S_{1}$, for which $d_{2}(\tilde{b}_{i})$ is also on $\partial \mathcal{D}$. A basis in Lemma \ref{basis} can be chosen so that for any fixed $b_{j}$, this is achieved by homotoping the fiber $S_{2}$ around its base space in $M$ if necessary. Using this, Equation \ref{s} and Lemma \ref{basis}, it follows that there is a basis for $H_{1}(M_{3};\mathbb{Z})$, each element of which is preserved by at least one of $d_{2}^{-1}d_{1}$ and $d_{2}d_{1}$. \\

It remains to show that at least one of $d_{2}^{-1}d_{1}$ and $d_{2}d_{1}$ preserves every element of a basis of $H_{1}(M_{3};\mathbb{Z})$. Suppose this is not the case. By homotoping the fiber $S_{2}$ around its base space in $M$ if necessary, we can assume that there are curves $\tilde{c}_{1}$ and $\tilde{c}_{2}$ such that:
\begin{itemize}
\item $\tilde{c}_{1}$ and $\tilde{c}_{2}$ are both in $\mathcal{F}_{1}$.
\item $d_{2}^{*-1}d_{1}^{*}[\tilde{c}_{1}]=[\tilde{c}_{1}]$ but $d_{2}^{*-1}d_{1}^{*}[\tilde{c}_{2}]\neq[\tilde{c}_{2}]$. Then $d_{2}^{*}d_{1}^{*}[\tilde{c}_{2}]=[\tilde{c}_{2}]$
\item $[\tilde{c}_{1}]$ is not in the span of $[\tilde{c}_{2}]$ and $[\tilde{s}]$
\item $[\tilde{c}_{1}]\neq [\tilde{s}]$ and $[\tilde{c}_{2}]\neq [\tilde{s}]$
\end{itemize}
Let $\tilde{c}_{3}$ be a curve homologous to $[\tilde{c}_{1}]+[\tilde{c}_{2}]$ on the same connected component of the pre-image of $S_{1}$ on $\partial \mathcal{D}$ as $\tilde{c}_{1}$ and $\tilde{c}_{2}$. We have seen that either $d_{2}^{*-1}d_{1}^{*}[\tilde{c}_{3}]=[\tilde{c}_{3}]$ or $d_{2}^{*}d_{1}^{*}[\tilde{c}_{3}]=[\tilde{c}_{3}]$. In the first case, $d_{2}^{*-1}d_{1}^{*}[\tilde{c}_{3}]-[\tilde{c}_{3}]=0$ implies $d_{2}^{*-1}d_{1}^{*}([\tilde{c}_{1}]+[\tilde{c}_{2}])-([\tilde{c}_{1}]+[\tilde{c}_{2}])= d_{2}^{*-2}[\tilde{c}_{2}]-[\tilde{c}_{2}]=0$. This contradicts Lemma \ref{basis}. Similarly if $d_{2}^{*}d_{1}^{*}[\tilde{c}_{3}]=[\tilde{c}_{3}]$, a contradiction is reached.
\end{proof}



Note that Lemma \ref{eitheror} does not require the assumption that the monodromies $\tau_{1}$ and $\tau_{2}$ are pseudo-Anosov. In Example \ref{example}, Lemma \ref{eitheror} is immediate; $d_{2}^{-1}d_{1}$ does not change the homotopy class of curves in $M_{3}$.


\section{Proof of Theorem \ref{maintheorem}}
\label{lastone}

This section uses Lemma \ref{eitheror} to prove Theorem \ref{maintheorem} by contradiction. The next definition 
will be helpful in the process.\\

\begin{figure}
\centering
\def\svgwidth{10cm}
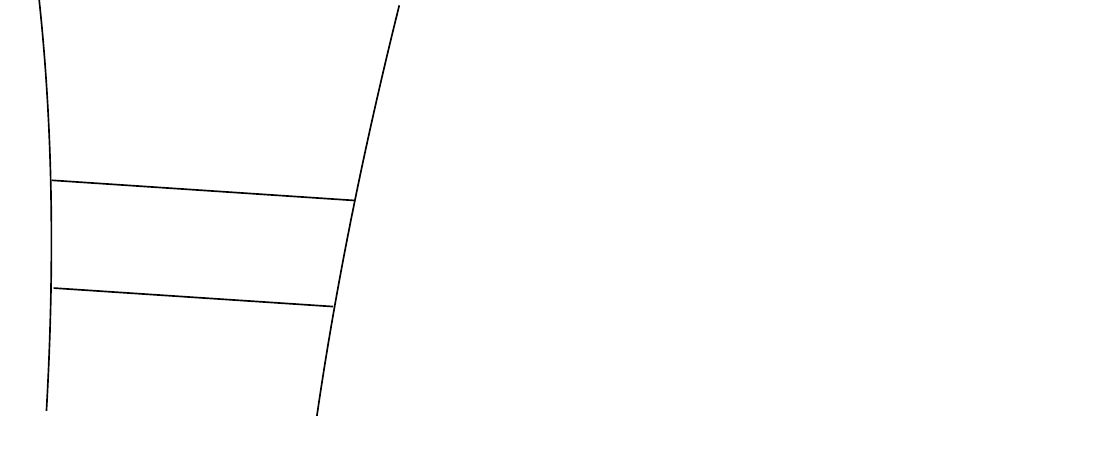
\caption{Surgering a curve $\alpha$ along an arc $b_{1}$.}
\label{1handle}
\end{figure}

\textbf{Surgeries.} Suppose $\alpha$ is an oriented set of curves on a surface, and $b_1$ is an oriented arc with endpoints on $\alpha$. Suppose also that extending $b_1$ out slightly past its endpoints would give a pair of crossings with $\alpha$ of opposite handedness. An arc with this property will be called an innermost arc. The arc $b_{1}$ determines an alteration to the set of curves $\alpha$ in $S$ that will be called surgering $\alpha$ along $b_{1}$. Let $b_2$ be a slight push off of $b_1$ keeping the endpoints on $\alpha$ so that $b_{1}$ and $b_2$ are disjoint. Then there exist small subarcs $a_1$ and $a_2$ of $\alpha$ joining the endpoints of $b_1$ and $b_2$, as shown in Figure \ref{1handle}. Surgering $\alpha$ along $b_1$ involves cutting the arcs $a_1$ and $a_2$ out of $\alpha$ and gluing in the arcs $b_1$ and $b_2$ in such a way that the orientations match up. A surgery does not change the homology class of $\alpha$.\\

We now have all the ingredients for the proof of Theorem \ref{maintheorem}.\\

\begin{proof}[Proof of Theorem \ref{maintheorem}]
In contradiction to Theorem \ref{maintheorem}, we assume that $S_{1}$ and $S_{2}$ are not isotopic. From Lemma \ref{eitheror}, one of $d^{-1}_{2}d_{1}$ or $d_{1}d_{2}$ acts trivially on $H_{1}(M_{3};\mathbb{Z})$. Therefore, we have two cases to consider; the first of which is easier, and does not require the assumption that the monodromies are pseudo-Anosov, and the second of which is more involved, and  does require the assumption that the monodromies are pseudo-Anosov.\\ 

\textbf{Case 1:} - $d_{1}d_{2}$ acts trivially on $H_{1}(M_{3};\mathbb{Z})$. Construct an embedded surface $S_{4}$ in $M_{3}$ as follows: Let $$\{s_{1}, s_{2}, \ldots, s_{m}\mid [s_{1}]+[s_{2}]+\ldots+[s_{m}]=[s]\}$$ be a set of curves in the intersection of $S_{3,1}$ and $S_{3,2}$. Take a fundamental domain of $S_{3,1}$ with boundary the set of curves $\{\tilde{s}_{i}, d_{1}(\tilde{s}_{i})\}$ and attach it to a fundamental domain of $S_{3,2}$ with boundary $\{d_{1}(\tilde{s}_{i}), d_{2}d_{1}(\tilde{s}_{i})\}$ along the common boundary components $\{d_{1}(\tilde{s}_{i})\}$. The surface $S_{4}$ is then obtained by taking the orbit under $d_{2}d_{1}$ of this union of fundamental domains, and attaching the connected components along common boundary curves to obtain a connected surface.\\

\begin{figure}
\centering
\def\svgwidth{8cm}
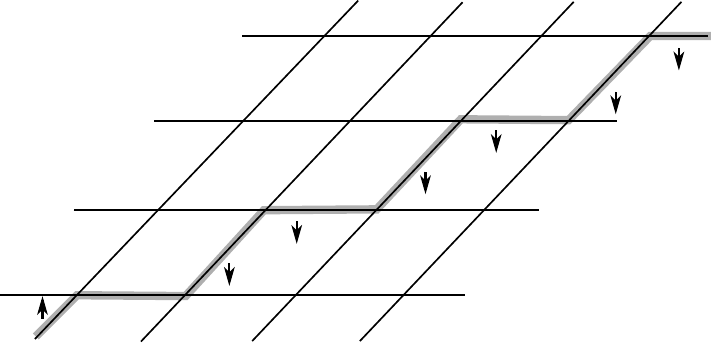
\caption{The surface $S_{4}$ is shown in grey, and the arrows represent the homotopy of $S_{4}$ onto $S_{3,1}$.}
\label{schematichomotopy}
\end{figure}

Recall that the diamonds each inherit a product structure from $\times_{1}$ and $\times_{2}$. Using the product structure $\times_{1}$, it can be seen that $S_{4}$ is homotopic to $S_{3,1}$ in $M_{3}$. This homotopy is illustrated schematically in Figure \ref{schematichomotopy}. However, if $d_{2}d_{1}$ acts trivially on $H_{1}(M_{3};\mathbb{Z})$, this implies that $H_{1}(S_{3};\mathbb{Z})$ has finite rank, contradicting Lemma \ref{basis}. \qed\\

\textbf{Case 2} - $d^{-1}_{2}d_{1}$ acts trivially on $H_{1}(M_{3};\mathbb{Z})$. A surface $S_{4}$ in $M_{3}$ is now constructed to be an infinite cyclic cover of a surface in $M$, with deck transformation group generated by $d_{2}^{-1}d_{1}$. This may be done for example by attaching two fundamental domains, $\mathcal{F}_{1}$ and $\mathcal{F}_{2}$ along common boundary curves.  Here $\mathcal{F}_{1}$ is a fundamental domain of $S_{3,1}\rightarrow S_{1}$ with boundary the set of curves $\{\tilde{s}_{i}, d_{1}(\tilde{s}_{i}\}$, and $\mathcal{F}_{2}$ is a fundamental domain of $S_{3,2}\rightarrow S_{2}$ with boundary $\{d_{1}(\tilde{s}_{i}), d_{2}^{-1}d_{1}(\tilde{s}_{i})\}$ and orientation opposite to that of a subsurface of $S_{3,2}$. The set of curves $\{s_{i}\}$ are in the intersection of $S_{3,1}$ and $S_{3,2}$. The details of the construction of $S_{4}$ are not important; we will only require it to be embedded and oriented.\\

\begin{figure}
\centering
\def\svgwidth{8cm}
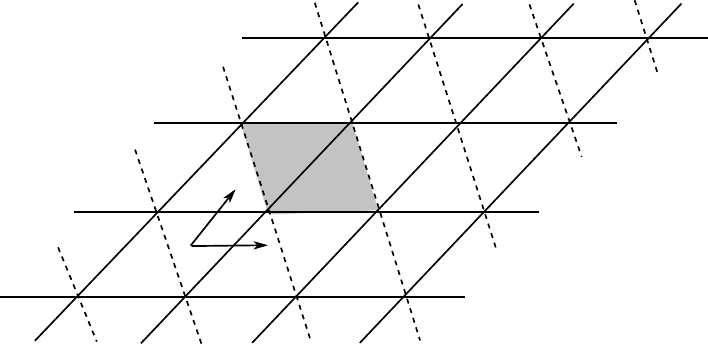
\caption{A fundamental domain $\sigma$ is shaded. The orbit of the surface $S_{4}$ under the action of the deck transformation group is represented by a dotted line.}
\label{toiletroll}
\end{figure}

A fundamental domain $\sigma$ for the cover $M_{3}\rightarrow M$ is illustrated schematically in Figure \ref{toiletroll}. The boundary of $\sigma$ consists of a subsurface of $S_{4}$, a subsurface of $d_{2}(S_{4})$, a subsurface of $S_{3,2}$, and the image of this subsurface under $d_{2}^{-1}d_{1}$. Let 
\begin{equation*}
\sigma_{\infty}:=\cup_{n=-\infty}^{n=\infty}(d_{2}^{-1}d_{1})^{n}(\sigma)
\end{equation*}
Using Lemma \ref{eitheror} we see tha the image of $H_{1}(\sigma_{\infty};\mathbb{Z})$ in $H_{1}(M_{3};\mathbb{Z})$ has finite rank. Identifying points on $\partial \sigma_{\infty}$ under the action of $d_{2}$ gives the manifold $M_{2}\simeq S_{2}\times_{2}\mathbb{R}$. Let $c$ be a simple, nonseparating curve on $S_{2}\times \{0\}\subset M_{2}$, $[c]\neq [s]$, and such that $c$ is disjoint from the projection $p(\partial \sigma_{\infty})$ of $\partial \sigma_{\infty}$ to $M_{2}$. The contradiction is found by showing that curves in $\sigma_{\infty}$ projecting onto curves in $M_{2}$ in the homology class $[c]$ can not be contained in a submodule of $H_{1}(S_{3};\mathbb{Z})$ of finite rank.\\

If $g$ is a curve in $M_{2}\setminus p(\partial \sigma_{\infty})$, denote by $\tilde{g}$ its pre-image in $\sigma_{\infty}$. In this context, $g$ is assumed to be a specific representative of a free homotopy class of curves in $M_{2}$, so its pre-image consists of a single curve in $\sigma_{\infty}$.\\

Since $p(\partial \sigma_{\infty})$ is oriented, in a sufficiently small $\epsilon$-neighbourbourhood of $p(\partial \sigma_{\infty})$ in $M_{2}$, it is possible to categorise points as being ``to the left'', ``to the right'' or ``on'' $p(\partial \sigma_{\infty})$. This is done in such a way that homotoping the curve $g$ in $M_{2}\setminus p(\partial \sigma_{\infty})$ over $p(\partial \sigma_{\infty})$ from left to right gives a curve $g^{'}$ with the property that $\tilde{g}^{'}$ is homotopic to $d_{2}(\tilde{g})$. \\

We now need to use the crucial assumption that the monodromies are pseudo-Anosov. By construction, $p(\partial \sigma_{\infty})\cap (S_{2}\times\{nk\})$ in $M_{2}$ is equal to $\delta_{2}^{nk}(p(\partial \sigma_{\infty})\cap (S_{2}\times\{0\}))$. Since $\tau_{2}$ is pseudo-Anosov, as $n$ approaches infinity, the geometric intersection number of $p(\partial \sigma_{\infty})\cap (S_{2}\times\{nk\})$ with $(c\times \mathbb{R})\cap (S_{2}\times\{nk\})$ approaches infinity. Suppose a homotopy shifts the curve $c$ in the positive $\mathbb{R}$ direction. By an appropriate choice of orientations on the fibers in $M$, it can be assumed without loss of generality that there are infinitely many places at which an arc of $c$ is homotoped over $p(\partial \sigma_{\infty})$ from left to right. \\

As $c$ is homotoped in the positive $\mathbb{R}$ direction, if a point on the curve approaches $p(\partial \sigma_{\infty})$ from the right, further homotopes are assumed to fix this point. Keep homotoping the curve until an arc on the curve is moved over $p(\partial \sigma_{\infty})$.\\

Since $p(\sigma_{\infty})$ is connected, by surgering the curve homotopic to $c$ along an arc or arcs in $p(\sigma_{\infty})$, a set of curves $\{\beta_{1}, \beta_{2}\}$ is obtained, where $\beta_{1}$ and $\beta_{2}$ are each disjoint from $p(\sigma_{\infty})$. This decomposition is not unique, and can be done such that neither $[\beta_{1}]$ nor $[\beta_{2}]$ are $[0]$ or $[s]$. Choose a basepoint in $M_{2}\setminus p(\partial \sigma_{\infty})$, and assume all curves have been homotoped within $M_{2}\setminus p(\partial \sigma_{\infty})$ to pass through the basepoint. The subscripts ``1'' and ``2'' are assigned such that the pre-image of $\beta_{1}\cup \beta_{2}$ in $\sigma_{\infty}\subset M_{3}$ is homologous in $M_{3}$ to the pre-image of $\beta_{1}\circ s_{2}^{*}\circ \beta_{2}\circ s_{2}^{*-1}$ in $\sigma_{\infty}\subset M_{3}$. By Lemma \ref{basis}, the pre-image of $\beta_{1}\circ s_{2}^{*}\circ \beta_{2}\circ s_{2}^{*-1}$ in $\sigma_{\infty}$ is not homologous in $M_{3}$ to the pre-image of $c$ in $\sigma_{\infty}$.\\

Now repeat this construction on $\beta_{1}$ and $\beta_{2}$. This time, $\beta_{1}$ is homologous in $M_{2}$ to $\gamma_{1}\cup\gamma_{2}$, and $\beta_{2}$ is homologous in $M_{2}$ to $\gamma_{3}\cup\gamma_{4}$. The pre-image of $\gamma_{1}\cup\gamma_{2}\cup\gamma_{3}\cup\gamma_{4}$ is therefore homologous in $M_{3}$ to the pre-image of $\gamma_{1}\circ s_{2}^{*}\circ \gamma_{2}\gamma_{3}\circ s_{2}^{*-1}\circ s_{2}^{*2}\circ \gamma_{4}\circ s_{2}^{*-2}$. By Lemma \ref{basis}, the homology class of the pre-image of this curve is not in the span of $[p^{-1}(c)]$ and $[p^{-1}(\beta_{1}\circ s_{2}^{*}\circ \beta_{2}\circ s_{2}^{*-1})]$. Also by Lemma \ref{basis}, the homology classes in $M_{3}$ of the pre-images of $c$, $\beta_{1}\circ s_{2}^{*} \circ \beta_{2} \circ s_{2}^{*-1}$ and $\gamma_{1}\circ s_{2}^{*}\circ \gamma_{2}\gamma_{3}\circ s_{2}^{*-1}\circ s_{2}^{*2}\circ \gamma_{4}\circ s_{2}^{*-2}$ can not be contained in a submodule of $H_{1}(M_{3};\mathbb{Z})$ of rank less than three. This completes the proof of case 2 and of the theorem.
\end{proof}

\begin{rem}\label{rem.nopA}
Note that when $\tau_{1}$ and $\tau_{2}$ are not pseudo-Anosov, $p(\partial \sigma_{\infty})$ could be a union of annuli, and so there is no possibility of deriving the promised contradiction. 
\end{rem}

\bibliographystyle{plain}
\bibliography{torellifacebib}

\end{document}